\newtheorem{thm}{Theorem}%[section]
\newtheorem{lem}[thm]{Lemma}
\newtheorem{prop}[thm]{Proposition}
\theoremstyle{definition}
\newtheorem*{ack}{Acknowledgment}
\def\R{\mathbb R}
\def\H{\mathbb H}
\def\SS{\mathbb S}
\def\ra{\rightarrow}
\def\pt{\partial}
\begin{document}
\title{A geometric inequality on hypersurface in hyperbolic space}
\author{Haizhong Li}
\address{Department of mathematical sciences, and Mathematical Sciences
Center, Tsinghua University, 100084, Beijing, P. R. China}
\email{hli@math.tsinghua.edu.cn}
\author{Yong Wei}
\address{Department of mathematical sciences, Tsinghua University, 100084, Beijing, P. R. China}
\email{wei-y09@mails.tsinghua.edu.cn}
\author{Changwei Xiong}
\address{Department of mathematical sciences, Tsinghua University, 100084, Beijing, P. R. China}
\email{xiongcw10@mails.tsinghua.edu.cn}
%\date{\today}
\thanks{The research of the authors was supported by NSFC No. 11271214.}
\subjclass[2010]{{53C44}, {53C42}}
\keywords{Inverse curvature flow; Inequality; Hyperbolic space}

\maketitle

\begin{abstract}
In this paper, we use the inverse curvature flow to prove a sharp geometric inequality on star-shaped and two-convex hypersurface in hyperbolic space.
\end{abstract}

\section{Introduction}

The classical Alexandrov-Fenchel inequalities for closed convex hypersurface $\Sigma\subset\R^n$ state that
\begin{align}\label{eq1-1}
    \int_{\Sigma}\sigma_m(\kappa)d\mu\geq C_{n,m}(\int_{\Sigma}\sigma_{m-1}(\kappa)d\mu)^{\frac{n-m-1}{n-m}},\quad 1\leq m\leq n-1
\end{align}
where $\sigma_m(\kappa)$ is the $m$-th elementary symmetric polynomial of the principal curvatures $\kappa=(\kappa_1,\cdots,\kappa_{n-1})$  of $\Sigma$  and $C_{n,m}=\frac{\sigma_m(1,\cdots,1)}{\sigma_{m-1}(1,\cdots,1)}$ is a constant. When $m=0$, \eqref{eq1-1} is interpreted as the classical isoperimetric inequality
\begin{equation}\label{isop}
    |\Sigma|^{\frac 1{n-1}}\geq \bar{C}_n Vol(\Omega)^{\frac 1n},
\end{equation}
which holds on all bounded domain $\Omega\subset\R^n$ with boundary $\Sigma=\pt\Omega$. Here $|\Sigma|$ is the area $\Sigma$ and $\bar{C}_n$ is a constant depending only on dimension $n$. Inequality \eqref{eq1-1} was generalized to star-shaped and $m$-convex hypersurface $\Sigma\subset\R^n$ by Guan and Li \cite{GL} using the inverse curvature flow recently, where $m$-convex means that the principal curvature of $\Sigma$ lies in the Garding's cone
\begin{equation*}
    \Gamma_m=\{\kappa\in\R^{n-1}|\sigma_i(\kappa)>0,i=1,\cdots,m\}.
\end{equation*}
Recently, Huisken \cite{Hu1} showed that in the case $m=1$, the assumption {\it star-shaped} can be replaced by {\it outward-minimizing}.

In this paper, we consider the hyperbolic space $\H^n=\R^+\times\SS^{n-1}$ endowed with the metric
\begin{equation*}
    \bar{g}=dr^2+\sinh^2rg_{\SS^{n-1}},
\end{equation*}
where $g_{\SS^{n-1}}$ is the standard round metric on the unit sphere $\SS^{n-1}$. It's a natural question to establish some analogue inequalities of \eqref{eq1-1} for closed hypersurface in $\H^n$. In the case of $m=1$, $\sigma_1=\sigma_1(\kappa)$ is just the mean curvature $H$ of $\Sigma$. Gallego and Solanes \cite{GS} have obtained a generalization of \eqref{eq1-1} to convex hypersurface in hyperbolic space using integral geometric methods, however, their result does not seem to be sharp. Denoting $\lambda(r)=\sinh r$, then $\lambda'(r)=\cosh r$.  Recently,  Brendle,Hung and Wang \cite{BHW} proved the following inequality for star-shaped and mean convex (i.e.,$H>0$) hypersurface $\Sigma\subset\H^n$:
\begin{equation}\label{eq1-3}
    \int_{\Sigma}(\lambda'H-(n-1)\langle\bar{\nabla}\lambda',\nu\rangle)d\mu\geq(n-1)\omega_{n-1}^{\frac 1{n-1}}{|\Sigma|}^{\frac{n-2}{n-1}}
\end{equation}
where $|\Sigma|$ is the area of $\Sigma$ and $\omega_{n-1}$ is the area of the unit sphere $\SS^{n-1}\subset\R^n$. de Lima and Girao \cite{LimaG} also proved the following related inequality independently.
\begin{equation}\label{eq1-2}
    \int_{\Sigma}\lambda'Hd\mu\geq (n-1)\omega_{n-1}\left((\frac {|\Sigma|}{\omega_{n-1}})^{\frac{n-2}{n-1}}+(\frac {|\Sigma|}{\omega_{n-1}})^{\frac{n}{n-1}}\right),
\end{equation}
Both inequalities \eqref{eq1-3} and \eqref{eq1-2} are sharp in the sense of that equality holds if and only if $\Sigma$ is a geodesic sphere centered at the origin. Here, we say a closed hypersurface $\Sigma\subset\H^n$ is star-shaped if the unit outward normal $\nu$ satisfies $\langle\nu,\pt_r\rangle>0$ everywhere on $\Sigma$, which is also equivalent to that $\Sigma$ can be parametrized by a graph
\begin{equation*}
    \Sigma=\{(r(\theta),\theta)|\theta\in\SS^{n-1}\}
\end{equation*}
for some smooth function $r$ on $\SS^{n-1}$.  We note that inequalities \eqref{eq1-3} and \eqref{eq1-2} have some applications in general relativity, see \cite{BHW,LimaG,W}.

In this paper, we consider the case $m=2$. We prove the following sharp inequality for star-shaped and two-convex hypersurface $\Sigma\subset\H^n$, where {\it two-convex} means that the principal curvature lies in the Garding's cone $\Gamma_2$ everywhere on $\Sigma$.
\begin{thm}\label{mainthm}
If $\Sigma\subset\H^n$ is a star-shaped and two-convex hypersurface, then
\begin{align}\label{maineq}
    \int_{\Sigma}\sigma_2d\mu\geq&\frac{(n-1)(n-2)}2~\left(\omega_{n-1}^{\frac{2}{n-1}}{|\Sigma|}^{\frac{n-3}{n-1}}+|\Sigma|\right),
\end{align}
where $\omega_{n-1}$ is the area of the unit sphere $\SS^{n-1}\subset\R^n$ and $|\Sigma|$ is the area of $\Sigma$. The equality holds if and only if $\Sigma$ is a geodesic sphere.
\end{thm}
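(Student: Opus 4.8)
The plan is to deform $\Sigma$ by the inverse curvature flow
\[
\frac{\partial X}{\partial t}=\frac{\sigma_1}{\sigma_2}\,\nu,\qquad \Sigma_0=\Sigma,
\]
and to read off \eqref{maineq} from a monotone quantity along the flow. The speed is $F^{-1}$ with $F=\sigma_2/\sigma_1$ a positive, monotone, degree-one concave function on $\Gamma_2$, so the theory of Gerhardt and Urbas in the hyperbolic setting should give that the flow exists for all $t\ge0$, stays star-shaped and two-convex, and that the rescaled hypersurfaces converge smoothly to a geodesic sphere, with the principal curvatures of $\Sigma_t$ becoming asymptotically equal as $t\to\infty$. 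I expect the principal technical obstacle to be exactly this long-time existence and convergence while preserving the cone $\Gamma_2$, since it rests on the full $C^0$, $C^1$, $C^2$ a priori estimates for the flow rather than on any single algebraic identity; the monotonicity below is comparatively clean once the flow is available.

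Granting the flow, I would set
\[
B(t)=\int_{\Sigma_t}\sigma_2\,d\mu-\frac{(n-1)(n-2)}{2}\,|\Sigma_t|,\qquad Q(t)=|\Sigma_t|^{-\frac{n-3}{n-1}}B(t),
\]
and show that $Q$ is non-increasing with limit $\frac{(n-1)(n-2)}{2}\omega_{n-1}^{2/(n-1)}$. On a geodesic sphere of radius $r$ one has $\sigma_2=\binom{n-1}{2}\coth^2 r$ and $|\Sigma|=\omega_{n-1}\sinh^{n-1}r$, so $\cosh^2r-\sinh^2r=1$ gives $B=\frac{(n-1)(n-2)}{2}\omega_{n-1}\sinh^{n-3}r$ and hence $Q\equiv\frac{(n-1)(n-2)}{2}\omega_{n-1}^{2/(n-1)}$; thus the limit is forced by the convergence, and $Q(0)\ge Q(\infty)$ is precisely \eqref{maineq}.

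For the monotonicity I would compute the evolution equations. From $\partial_t(d\mu)=\frac{\sigma_1^2}{\sigma_2}\,d\mu$ and the standard evolution $\partial_t h_i^{\,j}=-\nabla_i\nabla^j f-f(h^2)_i^{\,j}+f\delta_i^{\,j}$ in $\H^n$ (the last term coming from the ambient curvature $-1$), together with the fact that the first Newton tensor $(T_1)_i^{\,j}=\sigma_1\delta_i^{\,j}-h_i^{\,j}$ is divergence free, so that the Hessian-of-speed terms integrate to zero, I expect
\[
\frac{d}{dt}\int_{\Sigma_t}\sigma_2\,d\mu=\int_{\Sigma_t}\Big((n-2)\frac{\sigma_1^2}{\sigma_2}+3\frac{\sigma_1\sigma_3}{\sigma_2}\Big)d\mu,\qquad \frac{d}{dt}|\Sigma_t|=\int_{\Sigma_t}\frac{\sigma_1^2}{\sigma_2}\,d\mu,
\]
where the term $(n-2)\sigma_1$ is $\mathrm{tr}\,T_1$ produced by the curvature $-1$. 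Hence
\[
B'(t)=\int_{\Sigma_t}\Big(-\frac{(n-2)(n-3)}{2}\frac{\sigma_1^2}{\sigma_2}+3\frac{\sigma_1\sigma_3}{\sigma_2}\Big)d\mu.
\]
The Newton--MacLaurin inequalities $\sigma_1^2\ge\frac{2(n-1)}{n-2}\sigma_2$ and $\sigma_1\sigma_3\le\frac{2(n-3)}{3(n-2)}\sigma_2^2$, both valid pointwise on $\Gamma_2$, bound the integrand above by $-(n-1)(n-3)+\frac{2(n-3)}{n-2}\sigma_2$, and substituting $\int\sigma_2=B+\frac{(n-1)(n-2)}{2}|\Sigma|$ collapses this to the clean differential inequality $B'(t)\le\frac{2(n-3)}{n-2}\,B(t)$.

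Finally I would combine this with $\frac{d}{dt}\log|\Sigma_t|\ge\frac{2(n-1)}{n-2}$ (again from $\sigma_1^2\ge\frac{2(n-1)}{n-2}\sigma_2$) and the identity $\frac{n-3}{n-1}\cdot\frac{2(n-1)}{n-2}=\frac{2(n-3)}{n-2}=:c$. Since $B'\le cB$, the function $e^{-ct}B(t)$ is non-increasing; as it tends to a positive limit by the sphere asymptotics, $B(t)>0$ for all $t$, whence $\frac{Q'}{Q}=\frac{B'}{B}-\frac{n-3}{n-1}\frac{A'}{A}\le c-c=0$ and $Q$ is non-increasing, giving \eqref{maineq}. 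Equality forces $Q$ to be constant, hence equality throughout the Newton--MacLaurin steps, which holds only when all principal curvatures coincide, so $\Sigma$ is a geodesic sphere. (For $n=3$ both the $\sigma_3$ term and the coefficient $\frac{(n-2)(n-3)}{2}$ vanish, $B'\equiv0$, and the statement degenerates to the Gauss--Bonnet identity.)
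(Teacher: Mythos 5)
Your monotonicity argument is sound and is essentially the paper's: your flow is the paper's flow \eqref{evolu-1} up to a constant time reparametrization, your evolution formulas agree with Proposition \ref{prop2-3}, and your two pointwise bounds are exactly the Newton--MacLaurin inequalities \eqref{eq2-2}, \eqref{eq2-3} with $m=2$, leading to the same differential inequalities for $B$ and $|\Sigma_t|$. The genuine gap is in the asymptotic step. You assert that in hyperbolic space the (rescaled) flow hypersurfaces converge to a geodesic sphere, and you use this twice: to conclude $B(t)>0$ (which you need in order to divide by $B$ in $Q'/Q$) and to identify $\lim_{t\to\infty}Q(t)=\frac{(n-1)(n-2)}{2}\omega_{n-1}^{2/(n-1)}$ by plugging in the round-sphere values. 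But Gerhardt's theorem in $\H^n$ --- the result the paper invokes --- gives only that the leaves become totally umbilical exponentially fast, $|h_i^j-\delta_i^j|\le Ce^{-t/(n-1)}$; unlike the Euclidean Gerhardt--Urbas theory, the rescaled limit is a conformal multiple $e^{2\varphi}g_{\SS^{n-1}}$ of the round metric whose conformal factor $\varphi$ is in general \emph{not} constant (asymptotic umbilicality in hyperbolic space does not force roundness; there are explicit examples, e.g.\ for inverse mean curvature flow in $\H^3$, where the limit shape is not round). So the value of $\lim Q$ is not ``forced by the convergence''; a priori it depends on the unknown limiting conformal factor, and your argument establishes neither the positivity of $B$ nor the required lower bound on the limit.

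This is precisely where the paper does its real work, and it is the opposite of your assessment that the asymptotics are ``comparatively clean'': long-time existence is simply cited from Gerhardt (by you and by the paper alike), while the substance is the asymptotic lower bound. Writing $\Sigma_t$ as a graph $r(t,\theta)$ and expanding $\sigma_2$ and $d\mu$ using the decay \eqref{eq4-1}, the paper obtains (see \eqref{eq4-4}, \eqref{eq4-5}, with $\lambda=\sinh r$)
\[
Q(t)=\frac{(n-1)(n-2)}{2}\,
\frac{\displaystyle\int_{\SS^{n-1}}\Bigl(\lambda^{n-3}+\tfrac{n-3}{n-1}\lambda^{n-5}|\nabla\lambda|^2\Bigr)dvol_{\SS^{n-1}}}
{\displaystyle\Bigl(\int_{\SS^{n-1}}\lambda^{n-1}dvol_{\SS^{n-1}}\Bigr)^{\frac{n-3}{n-1}}}+o(1),
\]
and the bound $\liminf_{t\to\infty}Q(t)\ge\frac{(n-1)(n-2)}{2}\omega_{n-1}^{2/(n-1)}$ for an \emph{arbitrary} positive $\lambda$ is exactly Beckner's sharp Sobolev inequality on $\SS^{n-1}$ (Lemma \ref{lem4-1}); this same proposition, fed back into the differential inequality \eqref{eq3-5}, is also what yields $B\ge 0$ along the flow. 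Your proof becomes complete once you replace the sphere-convergence claim by this expansion-plus-Beckner argument; without it, the key inequality $Q(0)\ge\liminf Q$ has no lower bound to compare against. (Your parenthetical remark about $n=3$ is correct: there \eqref{maineq} degenerates to the Gauss--Bonnet identity, so equality no longer characterizes geodesic spheres, and the Sobolev exponent in Lemma \ref{lem4-1} degenerates as well --- the theorem should be read with $n\ge 4$.)
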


Note that there exists at least one elliptic point on a closed, connected hypersurface $\Sigma$ in hyperbolic space $\H^n$. Proposition 3.2 in \cite{BC} shows that if $\sigma_2$ is positive, then $\sigma_1$ is automatically positive. So our assumption {\it two-convex} can also be replaced by $\sigma_2>0$ on $\Sigma$.

The proof of Theorem \ref{mainthm} follows a similar argument as in \cite{BHW,GL,LimaG}. We evolve $\Sigma$ by a special case of the inverse curvature flow in \cite{Ge}, and consider the following  quantity defined by
\begin{align*}
    Q(t)=&{|\Sigma|}^{-\frac{n-3}{n-1}}\left(\int_{\Sigma}\sigma_2d\mu-\frac{(n-1)(n-2)}2|\Sigma|\right).
\end{align*}
We show that $Q(t)$ is monotone decreasing under the flow. Then we use the convergence result of the flow proved by Gerhardt to estimate a lower bound of the limit of $Q(t)$:
\begin{align*}
    \liminf_{t\ra\infty}Q(t)\geq \frac{(n-1)(n-2)}2~\omega_{n-1}^{\frac 2{n-1}}.
\end{align*}
In order to estimate this $\liminf$, we also use a sharp version Sobolev inequality on $\SS^{n-1}$ due to Beckner \cite{Be} as in \cite{BHW}. Finally Theorem \ref{mainthm} follows easily from the monotonicity and the lower bound of $\liminf_{t\ra\infty}Q(t)$.

\begin{ack}
The authors would like to thank Professor Ben Andrews for helpful discussions and his interest in this work. The second author also thank Frederico Gir\~{a}o for communications regarding Appendix A of \cite{LimaG}.
\end{ack}

\section{Preliminaries}

Let $\Sigma\subset\H^n$ be a closed hypersurface with unit outward normal $\nu$. The second fundamental form $h$ of $\Sigma$ is defined by
\begin{equation*}
    h(X,Y)=\langle\bar{\nabla}_X\nu,Y\rangle
\end{equation*}
for any two tangent fields $X,Y$. The principal curvature $\kappa=(\kappa_1,\cdots,\kappa_{n-1})$ are the eigenvalues of $h$ with respect to the induced metric $g$ on $\Sigma$. For $1\leq m\leq n-1$, the $m$-th elementary symmetric polynomial of $\kappa$ is defined as
\begin{equation*}
   \sigma_m(\kappa)=\sum_{i_1<i_2\cdots<i_m}\kappa_{i_1}\cdots\kappa_{i_m},
\end{equation*}
which can also be viewed as function of the second fundamental form $h_i^j=g^{jk}h_{ki}$.  In the sequel, we will simply write $\sigma_m$ for $\sigma_m(\kappa)$. We first collect the following basic facts on $\sigma_m$ (see, e.g,\cite{HLP,Hu2,Re}):

\begin{lem}
Denote $(T_{m-1})_j^i=\frac{\pt\sigma_m}{\pt h_i^j}$  and $(h^2)_i^j=g^{jl}g^{pk}h_{kl}h_{ip}$. We have
\begin{align}
    &\sum_{i,j}(T_{m-1})_j^ih_{i}^j=m\sigma_m,\\
    &\sum_{i,j}(T_{m-1})_j^i\delta_i^j=(n-m)\sigma_{m-1}\label{eq2-0}\\
    &\sum_{i,j}(T_{m-1})_j^i(h^2)_i^j=\sigma_1\sigma_m-(m+1)\sigma_{m+1}\label{eq2-1}
\end{align}
Moreover, if $\kappa\in\Gamma_m^+$, we have the following Newton-MacLaurin inequalities
\begin{align}
    &\frac{\sigma_{m-1}\sigma_{m+1}}{\sigma_m^2}\leq \frac{m(n-m-1)}{(m+1)(n-m)}\label{eq2-2}\\
    &\frac{\sigma_1\sigma_{m-1}}{\sigma_m}\geq \frac{m(n-1)}{n-m},\label{eq2-3}
  \end{align}
and the equalities hold in \eqref{eq2-2},\eqref{eq2-3} at a given point if and only if $\Sigma$ is umbilical there.
\end{lem}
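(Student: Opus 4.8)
The plan is to reduce all three identities to statements about the eigenvalues $\kappa=(\kappa_1,\dots,\kappa_{n-1})$ of $h_i^j$ and then argue combinatorially. Since every quantity in the three equalities is a scalar invariant built from $h$ and $g$, it suffices to verify them at an arbitrary point in a frame where $g_{ij}=\delta_{ij}$ and $h_i^j=\kappa_i\delta_i^j$ is diagonal. In such a frame $\sigma_m=\sigma_m(\kappa)$ depends only on the $\kappa_i$, the matrix $(T_{m-1})_j^i=\partial\sigma_m/\partial h_i^j$ is diagonal with $(T_{m-1})_i^i=\partial\sigma_m/\partial\kappa_i$, and $(h^2)_i^i=\kappa_i^2$. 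I would record at the outset the two elementary facts $\partial\sigma_m/\partial\kappa_i=\sigma_{m-1}(\kappa\mid i)$ and the deletion recursion $\sigma_m(\kappa)=\sigma_m(\kappa\mid i)+\kappa_i\,\sigma_{m-1}(\kappa\mid i)$, where $\sigma_{m-1}(\kappa\mid i)$ is the $(m-1)$-th symmetric function in the $n-2$ variables obtained by removing $\kappa_i$.

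The three identities then follow by counting monomials. For the first, $\sum_{i,j}(T_{m-1})_j^i h_i^j=\sum_i\kappa_i\,\sigma_{m-1}(\kappa\mid i)$, and since each monomial $\kappa_{i_1}\cdots\kappa_{i_m}$ of $\sigma_m$ is produced exactly once for each of its $m$ factors, the sum equals $m\sigma_m$. For \eqref{eq2-0}, $\sum_{i,j}(T_{m-1})_j^i\delta_i^j=\sum_i\sigma_{m-1}(\kappa\mid i)$; a fixed monomial of $\sigma_{m-1}$ uses $m-1$ indices and survives the deletion of $\kappa_i$ for exactly the $(n-1)-(m-1)=n-m$ indices not occurring in it, so the sum is $(n-m)\sigma_{m-1}$. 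For \eqref{eq2-1}, I would use the recursion to write $\kappa_i^2\,\sigma_{m-1}(\kappa\mid i)=\kappa_i\big(\sigma_m-\sigma_m(\kappa\mid i)\big)$, sum over $i$, and apply the first identity twice (with index $m$ and with index $m+1$, the latter giving $\sum_i\kappa_i\,\sigma_m(\kappa\mid i)=(m+1)\sigma_{m+1}$), obtaining $\sigma_1\sigma_m-(m+1)\sigma_{m+1}$.

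For the Newton--MacLaurin inequalities I would introduce the normalized functions $p_k=\sigma_k/\binom{n-1}{k}$, for which the classical Newton inequalities $p_{k-1}p_{k+1}\le p_k^2$ hold for any real $\kappa$, with equality if and only if all the $\kappa_i$ coincide. Taking $k=m$ and converting the binomial coefficients gives $\sigma_{m-1}\sigma_{m+1}/\sigma_m^2\le\binom{n-1}{m-1}\binom{n-1}{m+1}/\binom{n-1}{m}^2=\frac{m(n-m-1)}{(m+1)(n-m)}$, which is \eqref{eq2-2}. For \eqref{eq2-3} I would rewrite Newton's inequality as $p_{k+1}/p_k\le p_k/p_{k-1}$; here the hypothesis $\kappa\in\Gamma_m^+$ enters, since it guarantees $\sigma_1,\dots,\sigma_m>0$ and hence positivity of these ratios, allowing the chain $p_1/p_0\ge p_2/p_1\ge\cdots\ge p_m/p_{m-1}$ (with $p_0=1$). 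This yields $p_1p_{m-1}\ge p_m$, i.e. $\sigma_1\sigma_{m-1}/\sigma_m\ge(n-1)\binom{n-1}{m-1}/\binom{n-1}{m}=\frac{m(n-1)}{n-m}$, which is \eqref{eq2-3}.

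Finally, for the equality statements I would track the chains above: equality in \eqref{eq2-2} is exactly equality in Newton's inequality at $k=m$, forcing $\kappa_1=\cdots=\kappa_{n-1}$, i.e.\ umbilicity, while equality in \eqref{eq2-3} requires equality at every link $p_{k-1}p_{k+1}=p_k^2$, any one of which already forces all the $\kappa_i$ equal. The main obstacle is not the identities, which are purely combinatorial, but giving a clean and rigorous treatment of the equality case of Newton's inequalities and of the positivity used to chain the ratios; the latter is precisely where the cone hypothesis $\kappa\in\Gamma_m^+$ is indispensable.
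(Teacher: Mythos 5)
Your proof is correct. Note, however, that the paper itself offers no proof of this lemma: it is stated as a collection of standard facts with a pointer to the references \cite{HLP,Hu2,Re}, so there is no ``paper's approach'' to compare against line by line. What you wrote is precisely the standard argument those references contain: diagonalize $h_i^j$ at a point (legitimate, since all five statements are pointwise algebraic identities or inequalities in $g$ and $h$, and $T_{m-1}$ is diagonal in a frame diagonalizing $h$), reduce the three identities to the deletion recursion $\sigma_m=\sigma_m(\kappa\mid i)+\kappa_i\sigma_{m-1}(\kappa\mid i)$ together with monomial counting, and derive both inequalities from Newton's inequalities for the normalized functions $p_k=\sigma_k/\binom{n-1}{k}$, with the cone hypothesis supplying the positivity needed to chain the ratios $p_{k+1}/p_k\le p_k/p_{k-1}$. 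Your binomial arithmetic checks out: $\binom{n-1}{m-1}\binom{n-1}{m+1}/\binom{n-1}{m}^2=\frac{m(n-m-1)}{(m+1)(n-m)}$ and $(n-1)\binom{n-1}{m-1}/\binom{n-1}{m}=\frac{m(n-1)}{n-m}$. The one genuinely delicate point is the one you flagged yourself: the equality case of Newton's inequality $p_{k-1}p_{k+1}=p_k^2$ does \emph{not} force all $\kappa_i$ equal for arbitrary real tuples (e.g.\ $(1,0,\dots,0)$ gives $p_1p_3=0=p_2^2$), so the conclusion that equality in \eqref{eq2-2} or \eqref{eq2-3} implies umbilicity requires the hypothesis $\kappa\in\Gamma_m$; there $\sigma_{m-1}>0$ and $\sigma_m>0$ rule out the degenerate vanishing cases, and the standard Rolle-reduction proof of Newton's inequality then yields equality only for $\kappa_1=\cdots=\kappa_{n-1}$. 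Making that one step explicit would complete your writeup; everything else is airtight.
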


We now evolve $\Sigma\subset\H^n$ by the following evolution equation
\begin{equation}\label{evolu}
    \pt_tX=F\nu,
\end{equation}
where $\nu$ is the unit outward normal to $\Sigma_t=X(t,\cdot)$ and $F$ is the speed function which may depend on the position vector, principal curvatures and time. Let $g_{ij}$ be the induced metric and $d\mu_t$ be its area element on $\Sigma_t$. We have the following evolution equations.

\begin{prop}
Under the flow \eqref{evolu}, we have:
\begin{align}
    &\pt_tg_{ij}=2Fh_{ij}\nonumber\\
    &\pt_t\nu=-\nabla F,\nonumber\\
    &\pt_th_i^j=-\nabla^j\nabla_iF-F(h^2)_i^j+F\delta_i^j,\nonumber\\
    &\pt_td\mu=F\sigma_1d\mu,\label{eq2-4}\\
    &\pt_t\sigma_m=-\nabla^i((T_{m-1})_i^j\nabla_jF)-F(\sigma_1\sigma_m-(m+1)\sigma_{m+1})\nonumber\\
    &\qquad\qquad+(n-m)F\sigma_{m-1},    \label{eq2-5}
\end{align}
\end{prop}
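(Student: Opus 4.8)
The plan is to derive each identity directly from its geometric definition by differentiating in $t$ and using that the coordinate fields $\pt_t$ and $\pt_i$ on $I\times\Sigma$ commute, so $\bar\nabla_{\pt_t}\pt_iX=\bar\nabla_{\pt_i}\pt_tX=\bar\nabla_{\pt_i}(F\nu)$. For the induced metric $g_{ij}=\langle\pt_iX,\pt_jX\rangle$ I would substitute this relation; the $\nu$-component dies against $\pt_jX$ and what survives is $F\langle\bar\nabla_{\pt_i}\nu,\pt_jX\rangle=Fh_{ij}$, symmetrized to give $\pt_tg_{ij}=2Fh_{ij}$. For $\nu$, I first note $\langle\pt_t\nu,\nu\rangle=0$ so $\pt_t\nu$ is tangential, then differentiate $\langle\nu,\pt_iX\rangle=0$ in $t$ to read off its components as $-\pt_iF$, giving $\pt_t\nu=-\nabla F$. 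The area element then follows by tracing: $\pt_td\mu=\f12 g^{ij}\pt_tg_{ij}\,d\mu=Fg^{ij}h_{ij}\,d\mu=F\sigma_1 d\mu$. None of these three uses the ambient geometry beyond the metric relations.

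The evolution of the second fundamental form is the heart of the computation and the only place the curvature of $\H^n$ enters. Writing $h_{ij}=-\langle\nu,\bar\nabla_{\pt_i}\pt_jX\rangle$, I would differentiate in $t$ and commute the covariant derivatives $\pt_t$ and $\pt_i$ acting on $\pt_jX$; the commutator produces the ambient curvature term $\bar R(F\nu,\pt_iX)\pt_jX$, and since $\H^n$ has constant sectional curvature $-1$ this equals $-F(\langle\pt_iX,\pt_jX\rangle\nu-\langle\nu,\pt_jX\rangle\pt_iX)=-Fg_{ij}\nu$. Collecting the remaining pieces — the Hessian $-\nabla_i\nabla_jF$ from $\bar\nabla_{\pt_i}((\pt_jF)\nu+F\bar\nabla_{\pt_j}\nu)$ together with the term from $\pt_t\nu=-\nabla F$, and the term $+F(h^2)_{ij}$ coming from $\langle\nu,\bar\nabla_{\pt_i}\bar\nabla_{\pt_j}\nu\rangle=-(h^2)_{ij}$ — should give $\pt_th_{ij}=-\nabla_i\nabla_jF+F(h^2)_{ij}+Fg_{ij}$, where the last, curvature-generated term is $+Fg_{ij}$ precisely because the background curvature is $-1$. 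Raising an index with $\pt_tg^{jk}=-2Fh^{jk}$ then converts this to $\pt_th_i^j=-\nabla^j\nabla_iF-F(h^2)_i^j+F\delta_i^j$, the two quadratic contributions combining to $-F(h^2)_i^j$ and the curvature term giving $+F\delta_i^j$.

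Finally, the $\sigma_m$ equation is purely algebraic given the above. By the chain rule $\pt_t\sigma_m=(T_{m-1})_j^i\,\pt_th_i^j$, and substituting the formula for $\pt_th_i^j$ splits the right side into three pieces. Identity \eqref{eq2-1} turns $(T_{m-1})_j^i(h^2)_i^j$ into $\sigma_1\sigma_m-(m+1)\sigma_{m+1}$, and \eqref{eq2-0} turns $(T_{m-1})_j^i\delta_i^j$ into $(n-m)\sigma_{m-1}$. For the leading term I would invoke the divergence-free property of the Newton tensor, $\nabla_i(T_{m-1})_j^i=0$, which holds in a space form by the Codazzi equation, to rewrite $(T_{m-1})_j^i\nabla^j\nabla_iF=\nabla^i((T_{m-1})_i^j\nabla_jF)$, matching \eqref{eq2-5}. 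I expect the main obstacle to be the second fundamental form computation: one must track sign conventions carefully and isolate the single point where the ambient curvature enters, since an error there would corrupt the crucial $+F\delta_i^j$ term that encodes the hyperbolic background and ultimately produces the extra $|\Sigma|$ summand in the main inequality.
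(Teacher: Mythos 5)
Your proposal is correct and follows essentially the same route as the paper: the paper disposes of the first four identities as ``direct computations like in \cite{Hu}'' (exactly the computations you carry out, including the constant-curvature commutator term that produces $+F\delta_i^j$), and its proof of \eqref{eq2-5} is precisely your chain-rule argument using \eqref{eq2-0}, \eqref{eq2-1} and the divergence-free property of $(T_{m-1})_j^i$ cited from \cite{Re}.
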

\proof
The first four equations follow from direct computations like in \cite{Hu}. Now we calculate the evolution of $\sigma_m$ (cf. \cite{GL})
\begin{align*}
    \pt_t\sigma_m=&\frac{\pt\sigma_m}{\pt h_i^j}\pt_t h_i^j\\
    =&-(T_{m-1})_j^i\nabla^j\nabla_iF-F(T_{m-1})_j^i(h^2)_i^j+F(T_{m-1})_j^i\delta_i^j\\
    =&-\nabla^j((T_{m-1})_j^i\nabla_iF)-F(\sigma_1\sigma_m-(m+1)\sigma_{m+1})\nonumber\\
    &\qquad\qquad+(n-m)F\sigma_{m-1},
\end{align*}
where in the last equality we used \eqref{eq2-0},\eqref{eq2-1} and the divergence free property of $(T_{m-1})_j^i$ (see \cite{Re}).
\endproof

\begin{prop}\label{prop2-3}
Under the flow \eqref{evolu}, we have
\begin{align*}
    \frac d{dt}\int_{\Sigma}\sigma_md\mu=&(m+1)\int_{\Sigma}F\sigma_{m+1}d\mu+(n-m)\int_{\Sigma}F\sigma_{m-1}d\mu.
\end{align*}
\end{prop}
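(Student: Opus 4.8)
The plan is to differentiate the integral $\int_{\Sigma}\sigma_m\,d\mu$ directly, feeding in the two evolution equations already recorded in the previous proposition. Since the flow parametrizes each $\Sigma_t$ over a fixed domain, I can pull the time derivative inside the integral and apply the product rule to the integrand $\sigma_m\,d\mu$, writing
\begin{equation*}
\frac{d}{dt}\int_{\Sigma}\sigma_m\,d\mu=\int_{\Sigma}(\partial_t\sigma_m)\,d\mu+\int_{\Sigma}\sigma_m\,\partial_t(d\mu).
\end{equation*}

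The next step is to substitute \eqref{eq2-4} and \eqref{eq2-5}. The measure term contributes $\int_{\Sigma}\sigma_m F\sigma_1\,d\mu$, while $\partial_t\sigma_m$ splits into the divergence term $-\int_{\Sigma}\nabla^i((T_{m-1})_i^j\nabla_jF)\,d\mu$ together with the zeroth-order pieces $\int_{\Sigma}\bigl[-F(\sigma_1\sigma_m-(m+1)\sigma_{m+1})+(n-m)F\sigma_{m-1}\bigr]\,d\mu$. The key observation is that the divergence term vanishes: because $\Sigma$ is closed (compact without boundary), the divergence theorem gives $\int_{\Sigma}\nabla^i((T_{m-1})_i^j\nabla_jF)\,d\mu=0$. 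After discarding it, the term $-\int_{\Sigma}F\sigma_1\sigma_m\,d\mu$ coming from $\partial_t\sigma_m$ cancels exactly against the contribution $\int_{\Sigma}\sigma_m F\sigma_1\,d\mu$ from the evolving measure, leaving precisely the two claimed terms $(m+1)\int_{\Sigma}F\sigma_{m+1}\,d\mu$ and $(n-m)\int_{\Sigma}F\sigma_{m-1}\,d\mu$.

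I do not anticipate a genuine obstacle here. The only point requiring care is the vanishing of the divergence integral, which relies on $\Sigma_t$ being closed so that no boundary term survives; the rest is algebraic cancellation forced by the structure of \eqref{eq2-5}, whose $-F\sigma_1\sigma_m$ term is exactly what is needed to annihilate the contribution of the changing area element.
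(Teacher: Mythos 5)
Your proposal is correct and is exactly the paper's argument: the paper's proof reads ``This proposition follows directly from \eqref{eq2-4}, \eqref{eq2-5} and the divergence theorem,'' and your write-up simply spells out the cancellation of the $F\sigma_1\sigma_m$ terms and the vanishing of the divergence integral on the closed hypersurface. No gaps.
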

\proof
This proposition follows directly from \eqref{eq2-4}, \eqref{eq2-5} and the divergence theorem.
\endproof

In \cite{Ge} Gerhardt studied general inverse curvature flow of star-shaped hypersurface in hyperbolic space. For our purpose, we will use a special case of their result for the following flow
\begin{equation}\label{evolu-1}
    \pt_tX=\frac{n-2}{2(n-1)}\frac{\sigma_1}{\sigma_2}\nu.
\end{equation}
\begin{thm}[Gerhardt\cite{Ge}]
If the initial hypersurface is star-shaped and strictly two-convex, then the solution for the flow \eqref{evolu-1} exists for all time $t>0$ and the flow hypersurfaces converge to infinity while maintaining star-shapedness and strictly two-convex. Moreover, the hypersurfaces become strictly convex exponentially fast and more and more totally umbilical in the sense of
\begin{equation*}
    |h_i^j-\delta_i^j|\leq Ce^{-\frac t{n-1}},\quad t>0,
\end{equation*}
i.e., the principal curvatures are uniformly bounded and converge exponentially fast to one.
\end{thm}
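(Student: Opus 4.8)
The plan is to treat \eqref{evolu-1} as a scalar parabolic equation for the radial graph function and then run the standard a priori estimate machinery for expanding curvature flows, following Gerhardt and Urbas. Writing the speed as $F=\Phi^{-1}$ with $\Phi=\frac{2(n-1)}{n-2}\,\sigma_2/\sigma_1$, I would first record that on the cone $\Gamma_2$ the function $\Phi$ is positive, strictly monotone increasing in each principal curvature, homogeneous of degree one, and concave (equivalently, the speed $F$ is inverse-concave); these are precisely the structural properties that make \eqref{evolu-1} a strictly parabolic expanding flow. Since $\Sigma_0$ is star-shaped I parametrize $\Sigma_t$ as a radial graph $\{(u(t,\xi),\xi):\xi\in\SS^{n-1}\}$, so that \eqref{evolu-1} becomes a scalar parabolic PDE $\pt_t u=\mathcal F(\xi,u,Du,D^2u)$. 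Short-time existence and uniqueness then follow from standard parabolic theory, and the whole problem reduces to establishing a priori estimates, up to second order after the natural rescaling, on any maximal time interval.

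Next I would establish these estimates in the usual hierarchy. For the $C^0$ bound and the escape to infinity I would compare $\Sigma_t$ with geodesic spheres: on a geodesic sphere of radius $r$ all principal curvatures equal $\coth r$, so $F=\frac{\tanh r}{n-1}$, and the evolving spheres obey $\dot r=\frac{\tanh r}{n-1}$, giving $r(t)=\frac{t}{n-1}+O(1)$. Using inner and outer sphere barriers in the maximum principle traps $u(t,\cdot)$ between two such solutions, which both shows the flow exists for all $t>0$ and fixes the rate $u\sim\frac{t}{n-1}$. For the $C^1$ estimate I would derive the evolution equation of the gradient function $v=\langle\nu,\pt_r\rangle^{-1}$ and apply the maximum principle to keep $v$ uniformly bounded; this bounds $\langle\nu,\pt_r\rangle$ away from zero, so star-shapedness is preserved along the flow.

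The main obstacle is the $C^2$ (curvature) estimate, i.e.\ showing the principal curvatures stay uniformly pinched and two-convexity is preserved. Here I would use the evolution equation for the second fundamental form, namely the third displayed equation in the Proposition, $\pt_t h_i^j=-\nabla^j\nabla_i F-F(h^2)_i^j+F\delta_i^j$. Differentiating the flow twice produces a term quadratic in $\nabla h$ weighted by the second derivatives of the speed in the curvature variables; because $\Phi$ is concave this term has a favorable sign and may be discarded in the maximum principle, while the hyperbolic inhomogeneity $-F(h^2)_i^j+F\delta_i^j$ is controlled using the $C^0$, $C^1$ bounds together with the Newton--MacLaurin inequalities \eqref{eq2-2}--\eqref{eq2-3}. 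This yields a uniform two-sided bound on the $\kappa_i$ after rescaling, hence uniform parabolicity; Krylov--Safonov and Schauder estimates then upgrade these to uniform $C^\infty$ bounds, closing the long-time existence argument.

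Finally, for the asymptotic umbilicity I would rescale so that the limit is a fixed sphere and study the traceless second fundamental form $\mathring h_i^j=h_i^j-\frac{\sigma_1}{n-1}\delta_i^j$ (or directly the pinching ratio $\kappa_{\max}/\kappa_{\min}$). Using the concavity and homogeneity of $\Phi$ and the sphere barriers from the $C^0$ step, one shows $\mathring h$ satisfies a parabolic inequality forcing exponential decay at the rate dictated by the linearization $\dot r=\frac{\tanh r}{n-1}$, namely $|\mathring h|\le Ce^{-t/(n-1)}$; combining this with $\frac{\sigma_1}{n-1}=\coth r\to1$ (from the $C^0$ rate, and in fact at the faster rate $e^{-2t/(n-1)}$) gives $|h_i^j-\delta_i^j|\le Ce^{-t/(n-1)}$, which in particular makes the hypersurfaces strictly convex for large $t$. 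I expect the sharp decay rate, rather than the qualitative convergence to a sphere, to be the delicate point, since it requires carefully matching the rescaling normalization against the hyperbolic correction terms $F\delta_i^j-F(h^2)_i^j$.
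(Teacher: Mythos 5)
The paper offers no proof of this statement: it is quoted from Gerhardt \cite{Ge} and used as a black box, so the only meaningful comparison is with Gerhardt's original argument --- and your outline reconstructs precisely its architecture: reduction of \eqref{evolu-1} to a scalar parabolic equation for the radial graph via star-shapedness, geodesic-sphere barriers giving longtime existence and the rate $r\sim t/(n-1)$ (your computation $F=\tanh r/(n-1)$ on spheres is correct, as is the normalization $\Phi(1,\dots,1)=n-1$), the gradient bound via $v=\langle\nu,\pt_r\rangle^{-1}$ preserving star-shapedness, curvature bounds exploiting the concavity of $\sigma_2/\sigma_1$ on $\Gamma_2$, and Krylov--Safonov/Schauder bootstrapping. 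Two points in your sketch are genuinely thinner than what the proof requires. First, preservation of strict two-convexity is not just a matter of discarding a favorably signed concavity term in the $C^2$ estimate: one must bound $\Phi=\frac{2(n-1)}{n-2}\sigma_2/\sigma_1$ uniformly away from zero (equivalently, the speed from above), which Gerhardt obtains from a maximum-principle argument applied to the speed itself, using crucially that the curvature function is positive in $\Gamma_2$ and vanishes on $\pt\Gamma_2$; your appeal to the Newton--MacLaurin inequalities does not by itself prevent the principal curvatures from drifting to the boundary of the cone. Second, the sharp rate $|h_i^j-\delta_i^j|\le Ce^{-t/(n-1)}$ is not derived in \cite{Ge} by a maximum principle for the traceless second fundamental form $\mathring h$; it follows from exponential decay estimates for the rescaled graph function and its first two derivatives (exactly the estimates imported later in this paper as \eqref{eq4-1}) combined with interpolation and parabolic Schauder theory. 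Your alternative route through $\mathring h$ and the pinching ratio is plausible in spirit but would need the same a priori decay input to close, so as written the proposal is a correct roadmap of Gerhardt's proof rather than a self-contained argument --- which is consistent with how the present paper treats the theorem, namely as an external result.
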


\section{Monotonicity}

We define the quantity
\begin{align*}%\label{def-Q}
    Q(t)=&|\Sigma_t|^{-\frac{n-3}{n-1}}\left(\int_{\Sigma_t}\sigma_2d\mu-\frac{(n-1)(n-2)}2|\Sigma_t|\right),
\end{align*}
where $|\Sigma_t|$ is the area of $\Sigma_t$. In this section, we show that $Q(t)$ is monotone decreasing along the flow \eqref{evolu-1}.
\begin{prop}\label{mono}
Under the flow \eqref{evolu-1}, the quantity $Q(t)$ is monotone decreasing. Moreover, $\frac d{dt}Q(t)=0$ at some time $t$ if and only if the surface $\Sigma_t$ is totally umbilical.
\end{prop}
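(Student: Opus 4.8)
The plan is to differentiate $Q$ directly, substitute the evolution equations for its constituent pieces, and reduce the sign of $\frac{d}{dt}Q$ to the two Newton-MacLaurin inequalities \eqref{eq2-2} and \eqref{eq2-3} of the preliminary lemma. Write $F=\frac{n-2}{2(n-1)}\frac{\sigma_1}{\sigma_2}$ for the speed, set $A=|\Sigma_t|$ with $A'$ its $t$-derivative, and abbreviate $\alpha=\frac{n-3}{n-1}$. From \eqref{eq2-4} I get $A'=\int_{\Sigma_t}F\sigma_1\,d\mu=\frac{n-2}{2(n-1)}\int_{\Sigma_t}\frac{\sigma_1^2}{\sigma_2}d\mu$, and Proposition \ref{prop2-3} with $m=2$ gives $\frac{d}{dt}\int_{\Sigma_t}\sigma_2\,d\mu=\frac{3(n-2)}{2(n-1)}\int_{\Sigma_t}\frac{\sigma_1\sigma_3}{\sigma_2}d\mu+\frac{(n-2)^2}{2(n-1)}\int_{\Sigma_t}\frac{\sigma_1^2}{\sigma_2}d\mu$.

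Next I compute $\frac{d}{dt}Q$ by the product and chain rules. Factoring out $A^{-\alpha}$ and using $\frac{(n-1)(n-2)}{2}(1-\alpha)=n-2$, the bracketed expression collapses to $\frac{d}{dt}Q=A^{-\alpha}\bigl[-\alpha\frac{A'}{A}\int_{\Sigma_t}\sigma_2\,d\mu+\frac{d}{dt}\int_{\Sigma_t}\sigma_2\,d\mu-(n-2)A'\bigr]$. The key simplification is that the two $\int\frac{\sigma_1^2}{\sigma_2}$ contributions cancel exactly---this is precisely what forces the choice of the exponent $\frac{n-3}{n-1}$ and the normalization of $F$---leaving $\frac{d}{dt}Q=A^{-\alpha}\bigl[-\alpha\frac{A'}{A}\int_{\Sigma_t}\sigma_2\,d\mu+\frac{3(n-2)}{2(n-1)}\int_{\Sigma_t}\frac{\sigma_1\sigma_3}{\sigma_2}d\mu\bigr]$.

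It then remains to show the bracket is nonpositive. For the second term I apply \eqref{eq2-2} with $m=2$, namely $\sigma_1\sigma_3\leq\frac{2(n-3)}{3(n-2)}\sigma_2^2$, to obtain $\frac{3(n-2)}{2(n-1)}\int_{\Sigma_t}\frac{\sigma_1\sigma_3}{\sigma_2}d\mu\leq\alpha\int_{\Sigma_t}\sigma_2\,d\mu$. For the first term I apply \eqref{eq2-3} with $m=2$, namely $\frac{\sigma_1^2}{\sigma_2}\geq\frac{2(n-1)}{n-2}$ pointwise; integrating gives $A'=\frac{n-2}{2(n-1)}\int_{\Sigma_t}\frac{\sigma_1^2}{\sigma_2}d\mu\geq A$. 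Combining the two bounds yields $\frac{d}{dt}Q\leq A^{-\alpha}\,\alpha\bigl(\int_{\Sigma_t}\sigma_2\,d\mu\bigr)\bigl(1-\frac{A'}{A}\bigr)\leq 0$, since $\alpha\geq 0$, $\sigma_2>0$, and $A'\geq A$.

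Finally, for the equality characterization, $\frac{d}{dt}Q=0$ forces equality in both Newton-MacLaurin inequalities simultaneously at every point. By the last clause of the lemma, each of these holds with equality exactly where $\Sigma_t$ is umbilical, so $\frac{d}{dt}Q=0$ iff $\Sigma_t$ is totally umbilical; conversely, totally umbilical makes both equalities hold and $\frac{d}{dt}Q=0$. I expect no deep difficulty here: the main thing to get right is the bookkeeping in the two preceding steps---verifying that the $\int\frac{\sigma_1^2}{\sigma_2}$ terms really cancel after the substitution, and that \eqref{eq2-2} and \eqref{eq2-3} are applied in the correct directions with their equality cases both reducing to umbilicity.
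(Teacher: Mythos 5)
Your proposal is correct, and it differs from the paper's argument in one genuinely meaningful way. Both proofs use exactly the same ingredients: the evolution identities $\frac{d}{dt}\int_{\Sigma_t}\sigma_2\,d\mu=\frac{3(n-2)}{2(n-1)}\int\frac{\sigma_1\sigma_3}{\sigma_2}d\mu+\frac{(n-2)^2}{2(n-1)}\int\frac{\sigma_1^2}{\sigma_2}d\mu$ and $A'=\frac{n-2}{2(n-1)}\int\frac{\sigma_1^2}{\sigma_2}d\mu$, together with \eqref{eq2-2} and \eqref{eq2-3} applied in the same directions, and the same pointwise equality characterization via umbilicity. The difference is in the grouping. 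The paper first derives the differential inequality $f'\leq\frac{n-3}{n-1}f$ for $f(t)=\int_{\Sigma_t}\sigma_2\,d\mu-\frac{(n-1)(n-2)}{2}|\Sigma_t|$ (its \eqref{eq3-5}), and then writes $\frac{d}{dt}Q\leq A^{-\alpha}\alpha f\,(1-\frac{A'}{A})$; since $1-\frac{A'}{A}\leq 0$, this is only conclusive if $f\geq 0$, and the paper has to import Proposition \ref{prop4-2} from the \emph{next} section (the asymptotic lower bound on $Q$, proved via Gerhardt's convergence and Beckner's inequality) to establish that nonnegativity. Your decomposition keeps the term $-\alpha\frac{A'}{A}\int\sigma_2\,d\mu$ intact rather than folding it into $f$, so after bounding $\frac{3(n-2)}{2(n-1)}\int\frac{\sigma_1\sigma_3}{\sigma_2}d\mu\leq\alpha\int\sigma_2\,d\mu$ you land on $\frac{d}{dt}Q\leq A^{-\alpha}\alpha\,(\int\sigma_2\,d\mu)(1-\frac{A'}{A})\leq 0$, whose sign needs only $\int\sigma_2\,d\mu>0$ --- immediate since Gerhardt's theorem preserves strict two-convexity. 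So your proof is self-contained within Section 3 and removes the paper's (non-circular but awkward) forward dependency on Proposition \ref{prop4-2}; that is a cleaner logical structure. One caveat you share with the paper: for $n=3$ one has $\sigma_3\equiv 0$ and $\alpha=0$, so $Q$ is constant (Gauss--Bonnet) and the claim that $\frac{d}{dt}Q=0$ forces umbilicity degenerates; both arguments implicitly assume $n\geq 4$, consistent with Lemma \ref{lem4-1}.
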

\proof
Under the flow \eqref{evolu-1}, Proposition \ref{prop2-3} and \eqref{eq2-4} imply that
\begin{align}
    \frac d{dt}\int_{\Sigma}\sigma_2d\mu=&\frac{3(n-2)}{2(n-1)}\int_{\Sigma}\frac{\sigma_1\sigma_3}{\sigma_2}d\mu+\frac{(n-2)^2}{2(n-1)}\int_{\Sigma}\frac{\sigma_1^2}{\sigma_2}d\mu\label{eq3-1}\\
    \frac d{dt}|\Sigma_t|=&\frac{(n-2)}{2(n-1)}\int_{\Sigma}\frac{\sigma_1^2}{\sigma_2}d\mu.\label{eq3-2}
\end{align}
Combining \eqref{eq3-1}, \eqref{eq3-2} and \eqref{eq2-2}, we have
\begin{align}\label{eq3-3}
    \frac d{dt}\left(\int_{\Sigma}\sigma_2d\mu-(n-2)|\Sigma_t|\right)\leq &\frac{n-3}{n-1}\int_{\Sigma}\sigma_2d\mu.
\end{align}
By applying the Newton-MacLaurin inequality \eqref{eq2-3} in \eqref{eq3-2}, we also have
\begin{align}\label{eq3-4}
    \frac d{dt}|\Sigma_t|\geq&~|\Sigma_t|.
\end{align}
Then combining \eqref{eq3-3} and \eqref{eq3-4} gives that
\begin{align}\label{eq3-5}
    \frac d{dt}\left(\int_{\Sigma}\sigma_2d\mu-\frac{(n-1)(n-2)}2|\Sigma_t|\right)\leq&\frac{n-3}{n-1}\left(\int_{\Sigma}\sigma_2d\mu-\frac{(n-1)(n-2)}2|\Sigma_t|\right).
\end{align}

From Proposition \ref{prop4-2} in the next section and \eqref{eq3-5}, we know that the quantity
\begin{equation*}
   \int_{\Sigma}\sigma_2d\mu-\frac{(n-1)(n-2)}2|\Sigma_t|
\end{equation*}
is nonnegative along the flow \eqref{evolu-1}. Then inequalities \eqref{eq3-4} and \eqref{eq3-5} implies that
\begin{align*}
    \frac d{dt}Q(t)\leq 0.
\end{align*}
If the equality holds, the inequalities \eqref{eq2-2} and \eqref{eq2-3} assume equalities everywhere on $\Sigma_t$. Then $\Sigma_t$ is totally umbilical.
\endproof

\section{The asymptotic behavior of monotone quantity}

In this section, we use the convergence result of the flow \eqref{evolu-1} proved in \cite{Ge} to estimate the lower bound of the limit of $Q(t)$. First we need the following sharp Sobolev inequality on $\SS^{n-1}$ (\cite{Be}).
\begin{lem}\label{lem4-1}
For every positive function $f$ on $\SS^{n-1}$, we have
\begin{align*}%\label{eq4-0}
     &\int_{\SS^{n-1}}f^{n-3}dvol_{\SS^{n-1}}+\frac{n-3}{n-1}\int_{\SS^{n-1}}f^{n-5}|\nabla f|^2dvol_{\SS^{n-1}}\\
     &\quad \geq \omega_{n-1}^{\frac 2{n-1}}(\int_{\SS^{n-1}}f^{n-1}dvol_{\SS^{n-1}})^{\frac{n-3}{n-1}}.
\end{align*}
Moreover, equality holds if and only if $f$ is a constant.
\end{lem}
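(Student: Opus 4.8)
The plan is to convert the stated inequality, by a power substitution, into the sharp critical Sobolev inequality on the round sphere $\SS^{n-1}$, whose optimal constant is classical. The case $n=3$ is degenerate: the exponent $n-3$ collapses and each side equals $\omega_2$, so I assume $n\geq 4$. For a positive $f$ set $u=f^{(n-3)/2}$, which is a bijection of positive functions since $(n-3)/2>0$. Then $\nabla u=\frac{n-3}{2}f^{(n-5)/2}\nabla f$, so that $f^{n-5}|\nabla f|^2=\frac{4}{(n-3)^2}|\nabla u|^2$, while $f^{n-3}=u^2$ and $f^{n-1}=u^{2(n-1)/(n-3)}$. Substituting, the assertion is equivalent to
\[
\frac{4}{(n-1)(n-3)}\int_{\SS^{n-1}}|\nabla u|^2\,d\mu+\int_{\SS^{n-1}}u^2\,d\mu\geq \omega_{n-1}^{\frac{2}{n-1}}\Big(\int_{\SS^{n-1}}u^{\frac{2(n-1)}{n-3}}\,d\mu\Big)^{\frac{n-3}{n-1}}.
\]

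Writing $N=n-1$, the exponent $\frac{2(n-1)}{n-3}=\frac{2N}{N-2}$ is exactly the critical Sobolev exponent on an $N$-dimensional manifold, and the coefficient $\frac{4}{(n-1)(n-3)}=\frac{4}{N(N-2)}$ is precisely the one for which $\frac{4}{N(N-2)}(-\Delta)+1$ is a constant multiple of the conformal Laplacian $-\Delta+\frac{N(N-2)}{4}$ of the unit round metric. Thus the reduced inequality is the sharp Yamabe/Sobolev inequality on $\SS^{N}$. To prove it I would either invoke Beckner's theorem \cite{Be} directly, or give the standard conformal argument: precompose with an inverse stereographic projection, a conformal diffeomorphism $\SS^{N}\setminus\{p\}\to\R^{N}$, and use the conformal covariance of the conformal Laplacian to transform the inequality into the sharp Euclidean Sobolev inequality $\|\nabla v\|_{L^2(\R^{N})}^2\geq S_N\|v\|_{L^{2^*}(\R^{N})}^2$. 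The latter is the Aubin--Talenti inequality, obtained by Schwarz symmetrization followed by a one-dimensional optimization; evaluating the spherical form at $u\equiv 1$ fixes the constant as $\omega_{n-1}^{2/(n-1)}$.

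The step I expect to be the main obstacle is the sharpness, that is, the optimal constant together with the equality discussion. The extremizers of the critical Sobolev inequality on $\SS^{N}$ are not the constants alone but the full non-compact conformal orbit of bubbles $(\cosh t+\sinh t\,\langle\cdot,\xi\rangle)^{-(N-2)/2}$, $\xi\in\SS^{N}$, $t\geq 0$; equality for $u\equiv 1$ is immediate, while the reverse implication must be read relative to this family. Since the proof of Theorem \ref{mainthm} only uses the inequality itself to bound $\liminf_{t\to\infty}Q(t)$, I would isolate the extremal-profile computation as the delicate point and obtain the rigidity statement of the main theorem not from this lemma but from Proposition \ref{mono}, where vanishing of $\frac{d}{dt}Q(t)$ already forces total umbilicity.
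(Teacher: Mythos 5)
Your proof of the inequality itself coincides with the paper's: the paper makes the same substitution $w=f^{(n-3)/2}$ and invokes Beckner's Theorem 4 on $\SS^{n-1}$ with exponent $\frac{2(n-1)}{n-3}$ and gradient coefficient $\frac{4}{(n-1)(n-3)}$; your alternative derivation via inverse stereographic projection and the Aubin--Talenti inequality is the standard self-contained proof of that same sharp Sobolev inequality, so nothing essential differs on that side.

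Where you diverge from the paper is the equality case, and there you are right and the paper is not. As you observe, $\frac{2(n-1)}{n-3}$ is the \emph{critical} exponent $\frac{2N}{N-2}$ on $\SS^{N}$, $N=n-1$, and $\frac{4}{(n-1)(n-3)}=\frac{4}{N(N-2)}$ is the sharp conformal constant, so the extremal set is the full conformal orbit of bubbles, not just the constants: for any $t>0$ and $\eta\in\SS^{n-1}$, the non-constant positive function $f(\xi)=\bigl(\cosh t+\sinh t\,\langle\xi,\eta\rangle\bigr)^{-1}$ (equivalently, $u=f^{(n-3)/2}$ an Aubin--Talenti bubble) gives equality in Lemma \ref{lem4-1}. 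Hence the ``only if'' direction of the lemma is false for $n\geq 4$ (and for $n=3$ both sides degenerate to $\omega_2$). The paper's proof asserts that Beckner's inequality has equality only for constant $w$; that is correct for subcritical exponents $q<\frac{2N}{N-2}$ --- which is the situation in \cite{BHW}, where the relevant exponent is $\frac{2(n-1)}{n-2}$ --- but not at the critical exponent used here, so the paper's own rigidity claim in this lemma is unjustified. The slip is harmless for the main result, and your proposed repair is exactly what the paper actually does: equality in \eqref{maineq} forces $Q(t)$ to be constant, and Proposition \ref{mono} (equality in the Newton--MacLaurin inequalities) yields umbilicity; the equality assertion of Lemma \ref{lem4-1} is never invoked in the proof of Theorem \ref{mainthm}. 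So your proposal establishes everything the argument genuinely requires, and is more careful than the paper precisely at the point where the paper's statement fails.
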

\proof
From Theorem 4 in \cite{Be}, for any positive smooth function $w$ on $\SS^{n-1}$, we have the following inequalty
\begin{align*}
    &\frac 4{(n-1)(n-3)}\int_{\SS^{n-1}}|\nabla w|^2dvol_{\SS^{n-1}}+\int_{\SS^{n-1}}w^2dvol_{\SS^{n-1}}\\
    &\quad \geq \omega_{n-1}^{\frac 2{n-1}}(\int_{\SS^{n-1}}w^{\frac{2(n-1)}{n-3}}dvol_{\SS^{n-1}})^{\frac{n-3}{n-1}}.
\end{align*}
Moreover equality holds if and only if $w$ is constant. For any positive function $f$ on $\SS^{n-1}$, by letting $w=f^{\frac{n-3}2}$, we have
\begin{align*}
    &\int_{\SS^{n-1}}f^{n-3}dvol_{\SS^{n-1}}+\frac{n-3}{n-1}\int_{\SS^{n-1}}f^{n-5}|\nabla f|^2dvol_{\SS^{n-1}}\\
    &\quad \geq \omega_{n-1}^{\frac 2{n-1}}(\int_{\SS^{n-1}}f^{n-1}dvol_{\SS^{n-1}})^{\frac{n-3}{n-1}}
\end{align*}
and equality holds if and only if $f$ is a constant.
\endproof

\begin{prop}\label{prop4-2}
Under the flow \eqref{evolu-1}, we have
\begin{align}\label{asymp}
    \liminf_{t\ra\infty}Q(t)\geq \frac{(n-1)(n-2)}2~\omega_{n-1}^{\frac 2{n-1}}.
\end{align}
\end{prop}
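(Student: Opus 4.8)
The plan is to read off the value of $Q$ on geodesic spheres, to observe that this value is exactly the conjectured limit, and then to show that the flow drives $Q(t)$ down to—but not below—it, by combining Gerhardt's convergence result with the sharp Sobolev inequality of Lemma \ref{lem4-1}. A direct computation on a centered geodesic sphere of radius $r$, where every principal curvature equals $\coth r$ and $|\Sigma|=\omega_{n-1}\sinh^{n-1}r$, gives $\int_{\Sigma}\sigma_2 d\mu-\frac{(n-1)(n-2)}2|\Sigma|=\frac{(n-1)(n-2)}2\omega_{n-1}\sinh^{n-3}r$ (using $\cosh^2 r-\sinh^2 r=1$), hence $Q=\frac{(n-1)(n-2)}2\omega_{n-1}^{2/(n-1)}$ independently of $r$. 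So \eqref{asymp} asserts that the flow cannot overshoot this spherical value, which is plausible precisely because the flow hypersurfaces become totally umbilical at infinity; the work lies in making the approach quantitative.

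First I would use star-shapedness to write each $\Sigma_t$ as a graph $\{(r(\theta,t),\theta):\theta\in\SS^{n-1}\}$ and recall the graph formulae in the warped product $\H^n$. Writing $\lambda=\sinh r$, $\lambda'=\cosh r$, letting $\hat g$, $\nabla$ denote the round metric and connection on $\SS^{n-1}$, $r_i=\nabla_i r$, and $v=\sqrt{1+\lambda^{-2}|\nabla r|^2}$, one has $d\mu=v\lambda^{n-1}dvol_{\SS^{n-1}}$ and
\begin{equation*}
    h_{ij}=\frac1v\left(-\nabla_i\nabla_j r+\lambda\lambda'\hat g_{ij}+2\frac{\lambda'}\lambda r_ir_j\right),
\end{equation*}
from which $\sigma_2=\frac12\big((h_i^i)^2-h_i^jh_j^i\big)$ becomes an explicit expression in $r$ and its first two covariant derivatives. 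Substituting these into $\int_{\Sigma_t}\sigma_2 d\mu-\frac{(n-1)(n-2)}2|\Sigma_t|$ and integrating by parts to eliminate the Hessian terms, the crucial point is that the outcome organises itself as
\begin{equation*}
    \int_{\Sigma_t}\sigma_2 d\mu-\frac{(n-1)(n-2)}2|\Sigma_t|=\frac{(n-1)(n-2)}2\left(\int_{\SS^{n-1}}\sinh^{n-3}r\,dvol_{\SS^{n-1}}+\frac{n-3}{n-1}\int_{\SS^{n-1}}\sinh^{n-5}r\cosh^2 r\,|\nabla r|^2dvol_{\SS^{n-1}}\right)+\mathcal E,
\end{equation*}
where $\mathcal E$ collects the remaining terms. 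The bracket is exactly $\frac{(n-1)(n-2)}2$ times the left-hand side of Lemma \ref{lem4-1} applied to $f=\sinh r$, since then $|\nabla f|^2=\cosh^2 r\,|\nabla r|^2$; this is precisely why Beckner's inequality with this exponent is the right tool.

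Applying Lemma \ref{lem4-1} with $f=\sinh r$ bounds the bracket below by $\omega_{n-1}^{2/(n-1)}\big(\int_{\SS^{n-1}}\sinh^{n-1}r\,dvol_{\SS^{n-1}}\big)^{(n-3)/(n-1)}$, and dividing by $|\Sigma_t|^{(n-3)/(n-1)}$ gives
\begin{equation*}
    Q(t)\geq\frac{(n-1)(n-2)}2\omega_{n-1}^{2/(n-1)}\left(\frac{\int_{\SS^{n-1}}\sinh^{n-1}r\,dvol_{\SS^{n-1}}}{|\Sigma_t|}\right)^{\frac{n-3}{n-1}}+\frac{\mathcal E}{|\Sigma_t|^{(n-3)/(n-1)}}.
\end{equation*}
To conclude, I would invoke the convergence result of the flow: along \eqref{evolu-1} one has $r\to\infty$ uniformly while $|\nabla r|$ and $|\nabla^2 r|$ stay bounded, so $v=\sqrt{1+\lambda^{-2}|\nabla r|^2}\to1$ uniformly, whence $\int_{\SS^{n-1}}\sinh^{n-1}r\,dvol_{\SS^{n-1}}/|\Sigma_t|=\int\sinh^{n-1}r\,dvol_{\SS^{n-1}}/\int v\sinh^{n-1}r\,dvol_{\SS^{n-1}}\to1$. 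Taking $\liminf_{t\to\infty}$ then yields \eqref{asymp}, provided $\mathcal E/|\Sigma_t|^{(n-3)/(n-1)}\to0$.

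The main obstacle is the second step: executing the explicit graph computation of $\sigma_2$ and the integrations by parts so that the gradient term emerges with exactly Beckner's coefficient $\frac{n-3}{n-1}$, and then verifying that everything absorbed into $\mathcal E$ is genuinely of lower order after normalization. This bookkeeping is delicate because $|\Sigma_t|^{(n-3)/(n-1)}$ grows like $\sinh^{n-3}r$, the gradient contributions $\int\sinh^{n-3}r\,|\nabla r|^2$ are of the \emph{same} size and must be retained, and only the discarded pieces—relative corrections of order $\coth^2 r-1=O(\sinh^{-2}r)$, cubic and higher powers of $|\nabla r|$, and the contributions of the $r_ir_j$ terms and of the expansion of $v$—should decay one further power of $\sinh^{-2}r$ and hence vanish once divided by $|\Sigma_t|^{(n-3)/(n-1)}$. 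The uniform boundedness of $\nabla r$ and $\nabla^2 r$ furnished by the convergence result is exactly what makes this control possible.
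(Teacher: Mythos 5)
Your proposal is correct and follows essentially the same route as the paper: write $\Sigma_t$ as a graph, use Gerhardt's decay estimates to expand $\sigma_2$ and the area element, integrate by parts so that the main term becomes $\tfrac{(n-1)(n-2)}{2}\int_{\SS^{n-1}}\bigl(\lambda^{n-3}+\tfrac{n-3}{n-1}\lambda^{n-5}|\nabla\lambda|^2\bigr)dvol_{\SS^{n-1}}$, i.e.\ exactly the left-hand side of Lemma \ref{lem4-1} applied to $f=\sinh r$, and then normalize by $|\Sigma_t|^{(n-3)/(n-1)}$ and pass to the limit. The only difference is cosmetic: the paper carries out the expansion in the variable $\varphi$ defined by $\Phi'=1/\lambda$ (so $\nabla\varphi=\nabla r/\lambda$ and all discarded terms are explicit powers of $e^{-t/(n-1)}$), whereas you work with $r$ directly, but the key structural identity and the use of Beckner's inequality are identical.
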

\proof
Recall that star-shaped hypersurfaces can be written as graphs of function $r=r(t,\theta),\theta\in\SS^{n-1}$. Denote $\lambda(r)=\sinh(r)$, then $\lambda'(r)=\cosh(r)$. We next define a function $\varphi$ on $\SS^{n-1}$ by $\varphi(\theta)=\Phi(r(\theta))$, where $\Phi(r)$ is a positive function satisfying $\Phi'=1/\lambda$. Let $\theta=\{\theta^j\},j=1,\cdots,n-1$ be a coordinate system on $\SS^{n-1}$ and $\varphi_i,\varphi_{ij}$ be the covariant derivatives of $\varphi$ with respect to the metric $g_{\SS^{n-1}}$. Define
\begin{equation*}
    v=\sqrt{1+|\nabla\varphi|^2_{\SS^{n-1}}}.
\end{equation*}
From \cite{Ge}, we know that
\begin{equation}\label{eq4-1}
    \lambda=O(e^{\frac t{n-1}}),\qquad |\nabla\varphi|_{\SS^{n-1}}+|\nabla^2\varphi|_{\SS^{n-1}}=O(e^{-\frac t{n-1}})
\end{equation}
Since $\lambda'=\sqrt{1+\lambda^2}$, we have
\begin{equation}\label{eq4-3}
    \lambda'=\lambda(1+\frac 12\lambda^{-2}+O(e^{-\frac {4t}{n-1}}))
\end{equation}
From \eqref{eq4-1} we also have
\begin{equation}\label{eq4-2}
    \frac 1v=1-\frac 12|\nabla\varphi|^2_{\SS^{n-1}}+O(e^{-\frac {4t}{n-1}})
\end{equation}
In terms of $\varphi$, we can express the metric and second fundamental form of $\Sigma$ as following (see, e.g, \cite{BHW,Ding})
\begin{align*}
    g_{ij}=&\lambda^2(\sigma_{ij}+\varphi_i\varphi_j)\\
    h_{ij}=&\frac{\lambda'}{v\lambda}g_{ij}-\frac{\lambda}{v}\varphi_{ij},
\end{align*}
where $\sigma_{ij}=g_{\SS^{n-1}}(\pt_{\theta^i},\pt_{\theta^j})$. Denote $a_i=\sum_k\sigma^{ik}\varphi_{ki}$ and note that $\sum_ia_i=\Delta_{\SS^{n-1}}\varphi$. By \eqref{eq4-1}, the principal curvatures of $\Sigma_t$  has the following form
\begin{equation*}
    \kappa_i=\frac{\lambda'}{v\lambda}-\frac {a_i}{v\lambda}+O(e^{-\frac{4t}{n-1}}),\quad i=1,\cdots,n-1.
\end{equation*}
Then we have
\begin{align*}
    \sigma_2=&\sum_{i<j}\kappa_i\kappa_j\\
    =&\frac{(n-1)(n-2)}2(\frac{\lambda'}{v\lambda})^2-(n-2)\frac{\lambda'\Delta_{\SS^{n-1}}\varphi}{v^2\lambda^2}+O(e^{-\frac{4t}{n-1}}).
\end{align*}
By using \eqref{eq4-3} and \eqref{eq4-2},
\begin{align*}
    \sigma_2=&\frac{(n-1)(n-2)}2(1+\lambda^{-2}-|\nabla\varphi|^2_{\SS^{n-1}})\\
    &\quad -(n-2)\lambda^{-1}\Delta_{\SS^{n-1}}\varphi+O(e^{-\frac{4t}{n-1}}).
\end{align*}
On the other hand,
\begin{align*}
    \sqrt{\det g}=(\lambda^{n-3}+O(e^{\frac{(n-3)t}{n-1}}))\sqrt{\det g_{\SS^{n-1}}}.
\end{align*}
So we have
\begin{align*}
    &\int_{\Sigma_t}\sigma_2d\mu-\frac{(n-1)(n-2)}2|\Sigma_t|\\
    =&\int_{\SS^{n-1}}\lambda^{n-1}(\sigma_2-\frac{(n-1)(n-2)}2)dvol_{\SS^{n-1}}+O(e^{\frac{(n-5)t}{n-1}})\\
    =&\frac{(n-1)(n-2)}2\int_{\SS^{n-1}}(\lambda^{n-3}-\lambda^{n-1}|\nabla\varphi|^2_{\SS^{n-1}})dvol_{\SS^{n-1}}\\
    &\qquad -(n-2)\int_{\SS^{n-1}}\lambda^{n-2}\Delta_{\SS^{n-1}}\varphi dvol_{\SS^{n-1}}+O(e^{\frac{(n-5)t}{n-1}})\\
    =&\frac{(n-1)(n-2)}2\int_{\SS^{n-1}}(\lambda^{n-3}-\lambda^{n-1}|\nabla\varphi|^2_{\SS^{n-1}})dvol_{\SS^{n-1}}\\
    &\qquad+(n-2)^2\int_{\SS^{n-1}}\lambda^{n-3}\nabla\lambda\nabla\varphi dvol_{\SS^{n-1}}+O(e^{\frac{(n-5)t}{n-1}}).
\end{align*}
Since $\nabla\lambda=\lambda\lambda'\nabla\varphi$, by using \eqref{eq4-3}, we deduce that
\begin{align}
  &\int_{\Sigma_t}\sigma_2d\mu-\frac{(n-1)(n-2)}2|\Sigma_t|\nonumber\\
  =&\frac{(n-1)(n-2)}2\int_{\SS^{n-1}}(\lambda^{n-3}+\frac{n-3}{n-1}\lambda^{n-5}|\nabla\lambda|^2)dvol_{\SS^{n-1}}+O(e^{\frac{(n-5)t}{n-1}}).\label{eq4-4}
\end{align}
Moreover,
\begin{align}\label{eq4-5}
    |\Sigma_t|^{\frac{n-3}{n-1}}=&(\int_{\SS^{n-1}}\lambda^{n-1}dvol_{\SS^{n-1}})^{\frac{n-3}{n-1}}+O(e^{\frac{(n-5)t}{n-1}}).
\end{align}
Using Lemma \ref{lem4-1}, we can complete the proof of Proposition \ref{prop4-2} by combining \eqref{eq4-4} and \eqref{eq4-5}.
\endproof

We now complete the proof of Theorem \ref{mainthm}
\begin{proof}[Proof of Theorem \ref{mainthm}]
Since $Q(t)$ is monotone decreasing, we have
\begin{align*}
    Q(0)\geq\liminf_{t\ra\infty}Q(t)\geq \frac{(n-1)(n-2)}2~\omega_{n-1}^{\frac 2{n-1}}.
\end{align*}
This gives that the initial hypersurface $\Sigma$ satisfies
\begin{align*}
   \left(\int_{\Sigma}\sigma_2d\mu-\frac{(n-1)(n-2)}2|\Sigma|\right)\geq \frac{(n-1)(n-2)}2~\omega_{n-1}^{\frac 2{n-1}} |\Sigma|^{\frac{n-3}{n-1}},
\end{align*}
which is equivalent to the inequality \eqref{maineq} in Theorem \ref{mainthm}. Now we assume that equality holds in \eqref{maineq}, which implies that $Q(t)$ is a constant.  Then Proposition \ref{mono} implies $\Sigma_t$ is umbilical and therefore a geodesic sphere.
It is also easy to see that if $\Sigma$ is a geodesic sphere of radius $r$, then the area of $\Sigma$ is $|\Sigma|=\omega_{n-1}\sinh^{n-1}r$ and the integral of $\sigma_2$ is
\begin{align*}
   \int_{\Sigma}\sigma_2d\mu=&\frac{(n-1)(n-2)}2\omega_{n-1}\coth^2{r}\sinh^{n-1}r\\
   =&\frac{(n-1)(n-2)}2\omega_{n-1}\left(\sinh^{n-1}r+\sinh^{n-3}r\right)\\
   =&\frac{(n-1)(n-2)}2~\left(|\Sigma|+\omega_{n-1}^{\frac{2}{n-1}}{|\Sigma|}^{\frac{n-3}{n-1}}\right).
\end{align*}
Hence the equality holds in \eqref{maineq} on a geodesic sphere. This completes the proof of Theorem \ref{mainthm}.
\end{proof}
%=============================================================

\bibliographystyle{Plain}

\end{document}